\title{A Unified Approach to Computing the Zeros of Classical Orthogonal Polynomials}
\author{Ridha Moussa and James Tipton}
\newtheorem{thm}{Theorem}
\newtheorem{lem}[thm]{Lemma}
\newtheorem{prop}[thm]{Proposition}
\newcommand{\pd}[2]{\dfrac{\partial #1}{\partial #2}}
\begin{document}
\maketitle

\tableofcontents

\section{Introduction}

The Jacobi, Hermite, and Laguerre polynomials are collectively referred to as the classical orthogonal polynomials.  They have served as objects of study as early as the 19th century and have found applications to fields such as physics, approximation theory, and number theory.  The classical orthogonal polynomials may be characterized as solutions to a Sturm-Liouville type equation of the form
\[Q(x)y'' + L(x) y' + \lambda y = 0\]
In the case of the Jacobi polynomials, one has $Q(x) = 1- x^2$, $L(x) = \beta - \alpha -(\alpha + \beta + 2)x$, and $\lambda = n(n+\alpha + \beta + 1)$.  For the Hermite polynomials, take $Q(x) = 1$, $L(x) = -2x$, and $\lambda = $.  While the generalized Laguerre polynomials have $Q(x) = x$, $L(x) = (\alpha + 1 - x)$, and $\lambda = n$.\\

In each case, the corresponding polynomial solutions satisfy an orthogonality condition of the form
\[\int_{-\infty}^\infty P_m(x)P_n(x)W(x)dx = 0,\quad m\neq n\]
where $W(x) = \begin{cases}(1-x)^\alpha(1+x)^\beta, & -1\leq x\leq 1\\ 0, & |x|>1\end{cases}$ for the Jacobi polynomials, $W(x) = e^{-x^2}$ for the Hermite Polynomials, and $W(x) = \begin{cases}x^\alpha e^{-x}, & x\geq0\\ 0, & x<0\end{cases}$ for the generalized Laguerre polynomials.\\

There is much interest in the zeros of classical orthogonal polynomials, perhaps due in part to their well known electrostatic interpretation.  Most approaches to calculating these zeros are done so case by case using quadrature rules.  The approach we present here does not use quadrature, and to our knowledge does not appear in the literature.\\

In this note, we present a unified method to calculate the zeros of the classical orthogonal polynomials which is based in the electrostatic interpretation and its connection to the energy minimization problem. In section 2, we present the details of the method, while in section 3 we discuss the electrostatic interpretation in the context of the energy minimization problem.  In section 4 we provide some examples.  The paper is concluded with possible avenues of investigation.

\section{Method}

Given a polynomial $\displaystyle{y = c_n\prod_{i=1}^n(x-x_i)}$, where $c_n,x_i\in\mathbb{R}$, $c_n\neq 0$, and the $x_i$ are distinct, one has that:

\begin{align}
\dfrac{y'}{y}& =\sum_{i=1}^n\dfrac{1}{x-x_i}\\
\dfrac{y''}{y} & = \sum\limits_{i=1}^n\sum\limits_{j\in J_i}\dfrac{1}{(x-x_i)(x-x_j)} = 2\sum_{i<j}\dfrac{1}{(x-x_i)(x-x_j)}\\
\dfrac{ax^2+bx+c}{(x-x_i)(x-x_j)} & = a + \dfrac{ax_i^2+bx_i+c}{(x_i-x_j)(x-x_i)} + \dfrac{ax_j^2+b_j+c}{(x_j-x_i)(x-x_j)}
\end{align}
Identities (1) and (2) follow from the product rule.  Identity (3) follows from partial fraction decomposition.

\begin{lem}
In the above setting, one has that
\begin{align*}
    (\mu x+\nu)\dfrac{y'}{y} & = \mu n+\sum_{i=1}^n \dfrac{\nu+\mu x_i}{x-x_i}\qquad\text{and}\\
    (ax^2+bx+c)\dfrac{y^{''}}{y} & = a(n^2-n)+2\sum_{i\neq j}\dfrac{ax_i^2+bx_i+c}{(x_i-x_j)(x-x_i)}
\end{align*}

\end{lem}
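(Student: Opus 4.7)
The plan is to derive both identities directly from (1), (2), and (3), which the authors have already set up for exactly this purpose; the work is almost entirely bookkeeping, and no new analytic input is needed.

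For the first identity I would start from (1) and write $y'/y = \sum_{i=1}^n 1/(x-x_i)$, then multiply by $\mu x + \nu$. The key algebraic observation is the trivial splitting $\mu x + \nu = \mu(x-x_i) + (\mu x_i + \nu)$, which gives
\[
\frac{\mu x + \nu}{x - x_i} = \mu + \frac{\mu x_i + \nu}{x - x_i}.
\]
Summing over $i = 1, \dots, n$ contributes $\mu n$ from the constant pieces and $\sum_i (\nu + \mu x_i)/(x - x_i)$ from the remainders, which is the claimed formula.

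For the second identity I would multiply (2), in the form $y''/y = 2\sum_{i<j} 1/[(x-x_i)(x-x_j)]$, by $ax^2 + bx + c$ and apply (3) term by term. Each pair $i < j$ produces a constant $a$ together with the two partial fractions
\[
\frac{a x_i^2 + b x_i + c}{(x_i - x_j)(x - x_i)} \quad \text{and} \quad \frac{a x_j^2 + b x_j + c}{(x_j - x_i)(x - x_j)}.
\]
The constants contribute $2a\binom{n}{2} = a(n^2 - n)$. For the remaining double sum, the main bookkeeping step is to observe that summing these two fractions over all unordered pairs $\{i,j\}$ is the same as summing the single fraction $(a x_i^2 + b x_i + c)/[(x_i - x_j)(x - x_i)]$ over all ordered pairs $(i,j)$ with $i \ne j$, since each unordered pair contributes exactly two ordered terms, one with the roles of $i$ and $j$ reversed. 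After multiplying by the factor of $2$ from (2), this yields the asserted $2\sum_{i \ne j}(a x_i^2 + b x_i + c)/[(x_i - x_j)(x - x_i)]$.

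The only delicate point is this reindexing from $i < j$ to $i \ne j$ and tracking the factor of $2$ correctly; everything else is a direct substitution of (1)--(3). There is no analytical obstacle, since the distinctness of the $x_i$ ensures all denominators are nonzero and partial fractions are valid.
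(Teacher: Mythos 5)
Your proposal is correct and follows the paper's own argument essentially step for step: the explicit splitting $\mu x + \nu = \mu(x-x_i) + (\mu x_i + \nu)$ is exactly the ``long division'' the paper invokes for the first identity, and the reindexing of the unordered-pair sum over $i<j$ into the ordered-pair sum over $i\neq j$ (with the constant terms contributing $2a\binom{n}{2} = a(n^2-n)$) is precisely the paper's index-swapping step for the second. No gaps.
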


\begin{proof}
The first identity follows directly from (1) and some long division.  For the second identity, combine (2) and (3) to get:
\[(ax^2+bx+c)\dfrac{y^{''}}{y} = 2\sum_{i<j}\left[a + \dfrac{ax_i^2+bx_i+c}{(x_i-x_j)(x-x_i)} + \dfrac{ax_j^2+bx_j+c}{(x_j-x_i)(x-x_j)}\right]\]
There are $\frac{n^2-n}{2}$ terms in the above summation, thus we get that $2\sum\limits_{i<j}a = a(n^2-n)$.  Now observe that
\begin{align*}
    \sum_{i<j}\left[ \dfrac{ax_i^2+bx_i+c}{(x_i-x_j)(x-x_i)} + \dfrac{ax_j^2+bx_j+c}{(x_j-x_i)(x-x_j)}\right]&  = \sum_{i<j} \dfrac{ax_i^2+bx_i+c}{(x_i-x_j)(x-x_i)} + \sum_{i<j}\dfrac{ax_j^2+bx_j+c}{(x_j-x_i)(x-x_j)}\\
    = \sum_{i<j} \dfrac{ax_i^2+bx_i+c}{(x_i-x_j)(x-x_i)} + \sum_{j<i}\dfrac{ax_i^2+bx_i+c}{(x_i-x_j)(x-x_i)}
   & = \sum_{i\neq j} \dfrac{ax_i^2+bx_i+c}{(x_i-x_j)(x-x_i)}
\end{align*}
where the second to last equality follows from index swapping on the second summation.  Putting the above calculations together yields the desired result.
\end{proof}

\begin{prop}
Suppose $y$ is a degree $n$ polynomial solution to the differential equation:
\[(ax^2 + bx + c)y^{''} + (\mu x + \nu)y^{'} + \kappa y = 0\]
If the zeros of $y$, $x_1$, \dots, $x_n$ are distinct, then for each integer $k\in[1,n]$, we have that
\[2\sum_{j\in J_k}\dfrac{ax_k^2+bx_k+c}{x_k-x_j}+\nu + \mu x_k=0\]
where $J_k$ consists of all integers in $[1,n]$ except $k$.
\end{prop}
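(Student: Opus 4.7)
The plan is to divide the differential equation by $y$, valid where $y \neq 0$, and apply the two identities from the lemma. This transforms the ODE into an equality of rational functions whose only poles lie at the roots $x_1,\dots,x_n$. Reading off the residue at each $x_k$ will then yield the stated formula.

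First I will divide the ODE by $y$ to obtain
\[(ax^2+bx+c)\dfrac{y''}{y} + (\mu x + \nu)\dfrac{y'}{y} + \kappa = 0,\]
and then invoke both identities of the lemma. This produces
\[a(n^2-n) + 2\sum_{i \neq j}\dfrac{ax_i^2+bx_i+c}{(x_i-x_j)(x-x_i)} + \mu n + \sum_{i=1}^{n}\dfrac{\nu+\mu x_i}{x-x_i} + \kappa = 0,\]
an identity that holds for all $x$ outside $\{x_1,\dots,x_n\}$, hence an identity of rational functions.

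Next I will extract the residue at $x = x_k$. Since the roots are distinct, the simple fractions $\{1/(x - x_i)\}_{i=1}^{n}$ are linearly independent over the constants, so the coefficient of each pole must vanish separately; equivalently, multiplying through by $(x - x_k)$ and letting $x \to x_k$ isolates only the $i = k$ contributions. From the double sum these contribute $2\sum_{j \neq k}(ax_k^2+bx_k+c)/(x_k-x_j)$, and from the single sum we pick up $\nu + \mu x_k$; the constant contributions $a(n^2-n)$, $\mu n$, and $\kappa$ vanish in the limit. Setting the residue equal to zero yields exactly
\[2\sum_{j \in J_k}\dfrac{ax_k^2+bx_k+c}{x_k-x_j} + \nu + \mu x_k = 0.\]

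I do not anticipate a substantive obstacle: once the lemma is in hand the argument is essentially mechanical. The only point requiring minor care is the bookkeeping of which pairs $(i,j)$ in the double sum contribute a $1/(x - x_k)$ pole; the lemma's use of the unrestricted index set $\{(i,j) : i \neq j\}$ makes this transparent, as exactly the $n - 1$ summands with $i = k$ produce that factor, and the coefficient of $1/(x-x_k)$ they generate is precisely what appears in the conclusion, with the factor $2$ inherited directly from the lemma.
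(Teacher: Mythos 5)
Your proposal is correct and follows essentially the same route as the paper: divide by $y$, apply the lemma, then multiply by $(x - x_k)$ and let $x \to x_k$ to isolate the $i = k$ terms. The residue/linear-independence framing is just a restatement of that same limit argument, so there is nothing substantively different to flag.
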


\begin{proof}
Divide by $y$ and apply Lemma 1 to obtain:

\begin{align*}
     a(n^2-n)+2\sum_{i\neq j}\dfrac{ax_i^2+bx_i+c}{(x_i-x_j)(x-x_i)}+\mu n+\sum_{i=1}^n \dfrac{\nu+\mu x_i}{x-x_i} + \kappa = 0\\
     \iff 2\sum_{i\neq j}\dfrac{ax_i^2+bx_i+c}{(x_i-x_j)(x-x_i)}+\sum_{i=1}^n \dfrac{\nu+\mu x_i}{x-x_i} + M=0\\
     \iff 2(x-x_k)\sum_{i\neq j}\dfrac{ax_i^2+bx_i+c}{(x_i-x_j)(x-x_i)}+(x-x_k)\sum_{i=1}^n \dfrac{\nu+\mu x_i}{x-x_i} + (x-x_k)M=0
\end{align*}
where $M = \kappa + a(n^2-n)+\mu n$, and $k$ is some integer in $[1,n]$.  As $x$ approaches $x_k$, all terms will approach zero except those for which $i=k$.  Taking this limit gives the desired result.
\end{proof}

\section*{Jacobi Polynomials}

For $\alpha,\beta>-1$, the degree $n$ Jacobi polynomial $P_n^{(\alpha,\beta)}(x)$ solves the differential equation

\begin{align*}
(1-x^2)y^{''} + (\beta-\alpha-(\alpha + \beta+2)x)y' + n(n+\alpha+\beta+1)y=0
\end{align*}

Denote the $n$ distinct zeros of $P_n^{(\beta,\alpha)}(x)$ by $x_1$, \dots, $x_n$.  Let $a = -1$, $b = 0$, $c = 1$, $\mu = -(\alpha+\beta+2)$, and $\nu = \beta-\alpha$.  By Proposition $1$, we see that the zeros must satisfy
\begin{align*}
& 2\sum_{j\in J_k}\dfrac{-x_k^2+1}{x_k-x_j}+\beta-\alpha - (\alpha + \beta + 2) x_k=0\\
\iff & \dfrac{\frac{1}{2}(\alpha + 1)}{x_k-1} + \dfrac{\frac{1}{2}(\beta+1)}{x_k+1} + \sum_{j\in J_k}\dfrac{1}{x_k-x_j} = 0 \tag{4}
\end{align*}

In what follows, consider the real-valued function
\[f(\vec{x}) = \prod_{k=1}^n\Big[(1-x_k)^{(\alpha+1)/2}(1+x_k)^{(\beta+1)/2}\Big]\prod_{i<j}(x_j-x_i)\]
defined over the set $D_n = \{\vec{x}\in\mathbb{R}^n:-1<x_1<x_2<\cdots<x_n<1\}$.  Note that $f$ is smooth over $D_n$ and continuous on $\overline{D_n}$.  Note also that $f$ vanishes on the boundary of $D_n$, but is positive over $D_n$.  Since $f$ must attain an absolute maximum in $\overline{D_n}$, the previous observations show that this maximum occurs in $D_n$ and must be a critical point.

\begin{lem}
A point $\vec{x}\in D_n$ is a critical point of $f$
if and only if (1) holds for $k = 1,2,\dots, n$.
\end{lem}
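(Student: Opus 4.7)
The plan is to pass from $f$ to $\log f$. Since $f$ is smooth and strictly positive on the open set $D_n$, a point of $D_n$ is a critical point of $f$ if and only if it is a critical point of $\log f$; taking logarithms converts the defining product into a sum whose partial derivatives fall out of one-variable calculus.

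First I would write
\[
\log f(\vec{x}) = \sum_{k=1}^{n}\left[\tfrac{\alpha+1}{2}\log(1-x_k) + \tfrac{\beta+1}{2}\log(1+x_k)\right] + \sum_{i<j}\log(x_j-x_i),
\]
which is a legitimate real expression on $D_n$ because every factor of $f$ is positive there. Differentiating the single-variable terms in coordinate $x_k$ yields $-\tfrac{(\alpha+1)/2}{1-x_k}+\tfrac{(\beta+1)/2}{1+x_k}$, which I would rewrite as $\tfrac{(\alpha+1)/2}{x_k-1}+\tfrac{(\beta+1)/2}{x_k+1}$ in order to match the form of the Jacobi identity (4).

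Next I would handle the Vandermonde sum $\sum_{i<j}\log(x_j-x_i)$. The variable $x_k$ appears in a term either as $x_j$ with $i<k$, contributing $1/(x_k-x_i)$, or as $x_i$ with $j>k$, contributing $-1/(x_j-x_k)=1/(x_k-x_j)$. Combining the two cases collapses the derivative to $\sum_{j\in J_k}1/(x_k-x_j)$. Setting $\partial(\log f)/\partial x_k = 0$ therefore produces exactly the equation
\[
\tfrac{(\alpha+1)/2}{x_k-1}+\tfrac{(\beta+1)/2}{x_k+1}+\sum_{j\in J_k}\dfrac{1}{x_k-x_j}=0,
\]
which is the second form of (4). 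Running the equivalence in reverse, assuming (4) for every $k$ forces every partial derivative of $\log f$ to vanish, so $\vec{x}$ is a critical point of $\log f$ and hence of $f$.

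The main obstacle is purely bookkeeping: correctly splitting the Vandermonde sum according to whether $k$ appears as the larger or smaller index, and tracking the resulting sign so that everything collapses into a single unsigned sum over $j\in J_k$. No deeper analytic input is needed, since the positivity of $f$ on $D_n$ already justifies passing to $\log f$ and the domain $D_n$ ensures no denominator vanishes at any point under consideration.
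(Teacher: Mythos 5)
Your proposal is correct and follows essentially the same route as the paper: both pass to $\ln(f)$, expand the product into a sum of logarithms, and compute $\partial(\ln f)/\partial x_k$ to recover exactly the left-hand side of the Jacobi identity (4). Your treatment of the Vandermonde sum and the sign bookkeeping is simply a more explicit version of the computation the paper leaves implicit.
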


\begin{proof}
Consider instead 
\[\ln(f) = \sum_{k=1}^n\Big[\frac{\alpha+1}{2}\ln(1-x_k) + \frac{\beta+1}{2}\ln(1+x_k)\Big] + \sum_{i<j}\ln(x_j-x_i)\]
we have that
\[\pd{ln(f)}{x_k} = \frac{f_{x_k}}{f} = \dfrac{\frac{1}{2}(\alpha + 1)}{x_k-1} + \dfrac{\frac{1}{2}(\beta+1)}{x_k+1} + \sum_{j\in J_k}\dfrac{1}{x_k-x_j}\]
demonstrating the claim.
\end{proof}

\begin{lem}
The function $\ln(f)$ has only one critical point in $D_n$.
\end{lem}

\begin{proof}
The claim holds if we can show that $\ln(f)$ is concave in $D_n$.  This in turn will follow if we can show that the Hessian of $\ln(f)$ is diagonally dominant and negative definite.  To that extent, observe that
\[\pd{^2\ln(f)}{x_k^2} = \dfrac{-\frac{1}{2}(\alpha + 1)}{(x_k-1)^2} - \dfrac{\frac{1}{2}(\beta+1)}{(x_k+1)^2} - \sum_{j\in J_k}\dfrac{1}{(x_k-x_j)^2}<0\]
and for $i\neq j$ that
\[\pd{^2\ln(f)}{x_ix_j} = -\sum_{j\in J_k}\dfrac{1}{(x_i-x_j)^2}<0\]
The Hessian is clearly diagonally dominant, and since the entries are all negative, it must also be negative definite.
\end{proof}

\section*{Hermite Polynomials}

The degree $n$ Hermite polynomial $H_n(x)$ solves the differential equation $y^{''}-2xy^{'}+2ny = 0$.  Denote the $n$ distinct zeros of $H_n(x)$ by $x_1$, \dots, $x_n$.  Let $a = b = \nu = 0$, $c = 1$, $\mu = -2$, and $\kappa = 2n$.  By Proposition 1, we see that the zeros must satisfy

\begin{align*}
& 2\sum_{j\in J_k}\dfrac{1}{x_k-x_j} - 2x_k = 0\\
\iff & \sum_{j\in J_k}\dfrac{1}{x_k-x_j} - x_k = 0 \tag{5}
\end{align*}

In what follows, consider the real-valued function
\[f(\vec{x}) = \prod_{i<j}\Big[x_j-x_i\Big]e^{-\frac{1}{2}\sum_{k=1}^nx_k^2}\]
defined over the set $D_n = \{\vec{x}\in\mathbb{R}^n:-\infty<x_1<x_2<\cdots<x_n<\infty\}$.  Note that $f$ is smooth, positive, and bounded over $D_n$, but approaches $0$ on the boundary.  Thus $f$ must have a critical point in $D_n$.

\begin{lem}
A point $\vec{x}\in D_n$  is a critical point of $f$ if and only if (2) holds for $k = 1,2,\dots, n$.
\end{lem}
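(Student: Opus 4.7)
The plan is to mirror the argument of Lemma~3 essentially verbatim, replacing the Jacobi weight factors by the Gaussian factor $e^{-x_k^2/2}$. Since $f$ is strictly positive on $D_n$, a point is critical for $f$ if and only if it is critical for $\ln f$, so I would work with the logarithm from the outset.

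First, I would take the logarithm and split it as
\[
\ln f(\vec{x}) \;=\; \sum_{i<j}\ln(x_j-x_i)\;-\;\tfrac{1}{2}\sum_{k=1}^{n}x_k^{2}.
\]
Next, I would differentiate with respect to a fixed coordinate $x_k$. The only terms that depend on $x_k$ in the double sum are those of the form $\ln(x_k-x_i)$ for $i<k$ (contributing $1/(x_k-x_i)$) and those of the form $\ln(x_j-x_k)$ for $j>k$ (contributing $-1/(x_j-x_k)=1/(x_k-x_j)$). Combining these into a single sum over $j\in J_k$ and adding the derivative $-x_k$ coming from the Gaussian factor gives
\[
\frac{\partial \ln f}{\partial x_k} \;=\; \sum_{j\in J_k}\frac{1}{x_k-x_j}\;-\;x_k,
\]
which is exactly the left-hand side of (5). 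Setting this equal to zero for each $k$ establishes both directions of the biconditional.

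The only delicate step is tracking the sign correctly when $x_k$ appears as the subtracted variable inside $\ln(x_j-x_k)$; once that bookkeeping is handled, the argument is routine. No real obstacle arises here, and the proof is essentially a one-line logarithmic differentiation, just as in Lemma~3.
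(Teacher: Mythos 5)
Your proposal is correct and follows the paper's proof essentially verbatim: both pass to $\ln f = \sum_{i<j}\ln(x_j-x_i) - \tfrac{1}{2}\sum_k x_k^2$ and read off $\partial \ln f/\partial x_k = \sum_{j\in J_k}(x_k-x_j)^{-1} - x_k$, which is the left-hand side of (5). Your added remarks on positivity of $f$ and the sign bookkeeping in $\ln(x_j-x_k)$ are fine but do not change the argument.
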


\begin{proof}
Consider instead 
\[\ln(f) = \sum_{i<j}\ln(x_j-x_i) - \frac{1}{2}\sum_{k=1}^nx_k^2\]
we have that
\[\pd{ln(f)}{x_k} = \frac{f_{x_k}}{f} = \sum_{j\in J_k}\dfrac{1}{x_k-x_j} - x_k\]
demonstrating the claim.
\end{proof}

\begin{lem}
The function $\ln(f)$ has only one critical point in $D_n$.
\end{lem}

\begin{proof}
The claim holds if we can show that $\ln(f)$ is concave in $D_n$.  This in turn will follow if we can show that the Hessian of $\ln(f)$ is diagonally dominant and negative definite.  To that extent, observe that
\[\pd{^2\ln(f)}{x_k^2} = - \sum_{j\in J_k}\dfrac{1}{(x_k-x_j)^2}-1<0\]
and for $i\neq j$ that
\[\pd{^2\ln(f)}{x_ix_j} = -\sum_{j\in J_k}\dfrac{1}{(x_i-x_j)^2}<0\]
The Hessian is clearly diagonally dominant, and since the entries are all negative, it must also be negative definite.
\end{proof}

\section*{Laguerre Polynomials}

The degree $n$ generalized Laguerre polynomial $L_n^{(\alpha)}(x)$ solves the differential equation 
\[xy^{''}+(\alpha + 1 - x)y^{'} + ny = 0\]
Denote the $n$ distinct zeros of $L_n^{(\alpha)}(x)$ by $x_1$, \dots, $x_n$.  Let $a = c = 0$, $b=1$, $\mu = -1$, $\nu = \alpha + 1$, and $\kappa = n$.  By Proposition 1 we see that the zeros must satisfy

\begin{align*}
    2\sum_{j\in J_k}\dfrac{x_k}{x_k-x_j} + \alpha + 1 -x_k = 0\iff \sum_{j\in J_k}\dfrac{1}{x_k-x_j} + \dfrac{\frac{1}{2}(\alpha + 1)}{x_k} - \frac{1}{2} = 0 \tag{6}
\end{align*}

In what follows, consider the real-valued function
\[f(\vec{x}) = \prod_{i<j}\Big[x_j-x_i\Big]\prod_{k=1}^n\Big[x_k^{(\alpha+1)/2}\Big]e^{-\frac{1}{2}\sum_{k=1}^nx_k}\]
defined over the set $D_n = \{\vec{x}\in\mathbb{R}^n:0<x_1<x_2<\cdots<x_n<\infty\}$.  Note that $f$ is smooth, positive, and bounded over $D_n$, but approaches $0$ on the boundary.  Thus $f$ must have a critical point in $D_n$.

\begin{lem}
A point $\vec{x}\in D_n$  is a critical point of $f$ if and only if (3) holds for $k = 1,2,\dots, n$.
\end{lem}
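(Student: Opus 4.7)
The plan is to mimic the proofs of the two analogous lemmas that appeared in the Jacobi and Hermite sections, since the structure of $f$ is exactly parallel: a Vandermonde factor multiplied by a product of one-variable weights. The only obstacle is that the partial-derivative computation must now handle two different one-variable weights ($x_k^{(\alpha+1)/2}$ and $e^{-x_k/2}$), but each is smooth and positive on $D_n$, so the logarithm trick applies without issue.

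First, I would observe that $f$ is strictly positive on $D_n$, hence a point $\vec{x}\in D_n$ is a critical point of $f$ if and only if it is a critical point of $\ln(f)$. Taking the logarithm converts the three products into sums:
\[
\ln(f) = \sum_{i<j}\ln(x_j-x_i) + \frac{\alpha+1}{2}\sum_{k=1}^n \ln(x_k) - \frac{1}{2}\sum_{k=1}^n x_k.
\]

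Next, I would differentiate $\ln(f)$ with respect to $x_k$, being careful with the Vandermonde sum: the terms involving $x_k$ in $\sum_{i<j}\ln(x_j-x_i)$ are precisely those indexed by pairs containing $k$, and each such term contributes $\pm 1/(x_k - x_j)$ depending on whether $k$ is the larger or smaller index; these combine into $\sum_{j\in J_k} 1/(x_k-x_j)$. The other two sums contribute $\frac{(\alpha+1)/2}{x_k}$ and $-\frac{1}{2}$ respectively. The result is
\[
\pd{\ln(f)}{x_k} = \sum_{j\in J_k}\dfrac{1}{x_k-x_j} + \dfrac{\frac{1}{2}(\alpha+1)}{x_k} - \frac{1}{2}.
\]

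Finally, setting this expression equal to zero for each $k = 1,\dots,n$ is exactly the Laguerre condition (6), which establishes the equivalence. I do not foresee a genuine obstacle here; the only care needed is in verifying the sign bookkeeping when differentiating the Vandermonde factor, which is identical to what was done in the Hermite lemma.
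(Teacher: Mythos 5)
Your proposal is correct and follows essentially the same route as the paper: pass to $\ln(f)$, differentiate term by term, and identify the resulting expression with the Laguerre condition (labeled (6) in the text, despite the lemma's reference to ``(3)''). The only difference is that you spell out the sign bookkeeping for the Vandermonde factor and the positivity justification for the logarithm step, which the paper leaves implicit.
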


\begin{proof}
Consider instead 
\[\ln(f) = \sum_{i<j}\ln(x_j-x_i) + \sum_{k=1}^n\Big[\frac{\alpha+1}{2}\ln x_k\Big] - \frac{1}{2}\sum_{k=1}^nx_k\]
we have that
\[\pd{ln(f)}{x_k} = \frac{f_{x_k}}{f} = \sum_{j\in J_k}\dfrac{1}{x_k-x_j} + \dfrac{\frac{1}{2}(\alpha + 1)}{x_k} - \frac{1}{2}\]
demonstrating the claim.
\end{proof}

\begin{lem}
The function $\ln(f)$ has only one critical point in $D_n$.
\end{lem}

\begin{proof}
The claim holds if we can show that $\ln(f)$ is concave in $D_n$.  This in turn will follow if we can show that the Hessian of $\ln(f)$ is diagonally dominant and negative definite.  To that extent, observe that
\[\pd{^2\ln(f)}{x_k^2} = - \sum_{j\in J_k}\dfrac{1}{(x_k-x_j)^2} - \dfrac{\frac{1}{2}(\alpha+1)}{x_k^2}<0\]
and for $i\neq j$ that
\[\pd{^2\ln(f)}{x_ix_j} = -\sum_{j\in J_k}\dfrac{1}{(x_i-x_j)^2}<0\]
The Hessian is clearly diagonally dominant, and since the entries are all negative, it must also be negative definite.
\end{proof}

\section{Electrostatic Interpretation and the Connection to the Energy Minimization Problem}

As detailed by Szego in \cite{Szegoe1975}, the zeros of the classical orthogonal polynomials may be interpreted as the equilibrium position to an electrostatic problem.  Stieltjes derived this connection in the case of the Jacobi polynomials in 1885.   In this case, the problem is to find the position of $n\geq 2$ unit ``masses' in the interval $[-1,1]$, given two fixed positive masses at $-1$ and $1$, for which electrostatic equilibrium is attained.\\

Interest in this connection has been steadily growing, see Marcell\`{a}n, Mart\'{i}nez-Finkelshtein, and Mart\'{i}nez-Gonz\'{a}lez \cite{MARCELLAN2007258} for details.  As noted in \cite{MARCELLAN2007258}, this is due in part to advances in the theory of logarithmic potentials as well as special functions from other areas of study, such as physics, combinatorics, and number theory.  In \cite{MARCELLAN2007258}, the authors consider the following natural questions:
\begin{enumerate}
    \item Can the electrostatic interpretation be generalized to other families of polynomials?
    \item Is it necessary to consider the global minimum of the energy; what about other equilibria?
\end{enumerate}

In regards to the first question, it is noted in \cite{MARCELLAN2007258} that Ismail (see \cite{ismail2000electrostatics}, \cite{ismail2000more}) has provided an electrostatic model for general orthogonal polynomials, in which the external field is given as the sum of a long range and short range potential.  For example, in \cite{ismail2000electrostatics}, an explicit formula is given for the total energy of the model at the equilibrium position, and this energy is shown to be minimum.  In the case of Freud weights, the total energy is shown to be asymptotic to $\frac{-n^2}{\alpha}\ln n$.\\ 

The authors of \cite{MARCELLAN2007258} consider a more general case where the weight function satisfies the Pearson equation, in particular with weight function corresponding to the Freud-type polynomials.  It is noted that in this case, the zeros of the Freud-Type polynomials provide a critical configuration for the total energy; but it is an open problem as to whether the zeros are in a stable equilibrium.  In regards to the second question, it is posited whether other types of equilibria are preserved in this case.\\

The authors of \cite{MARCELLAN2007258} also present a max-min characterization of the zeros of the Jacobi polynomials which is amenable to complex zeros of the family when the parameters fall out of the ``classical'' bounds.  Loosely speaking, the characterization shows that of all possible compact continua from -1 to 1 (within the complex plane), the energy (minimized over $n$ points for a given compact continua) is maximized over all compact continua when the $n$ points are the zeros of the Jacobi polynomial.\\

More recently, in regards to question 1 above, Ismail and Wang developed an electrostatic interpretation to quasi-orthogonal polynomials in \cite{ismail2019quasi}.  The main result is an analogue to one given in \cite{ismail2000electrostatics}.  In brief, it says that the equilibrium position of $n$ unit charges in the presence of a given external field is uniquely attained at the zeros of the associated quasi-orthogonal polynomials.

\section{Examples}

In the tables that follow, approximations of zeros are listed for a variety of classical orthogonal polynomials of a specified degree, $n$. The \textbf{Jacobi} column corresponds to the general Jacobi polynomial with $\alpha = \frac{1}{4}$ and $\beta = \frac{1}{8}$.  The \textbf{Chebyshev} column refers to the Chebyshev polynomials of the 1st kind, which correspond to Jacobi polynomials with $\alpha=\beta = \frac{-1}{2}$.  The \textbf{Gegenbauer} column corresponds to Jacobi polynomials with $\alpha = \beta = \frac{1}{4}$.  The \textbf{Legendre} column corresponds to Jacobi polynomials with $\alpha = \beta = 0$.  The \textbf{Laguerre} column corresponds to the classical Laguerre polynomials.  The \textbf{General Laguerre} column corresponds to Laguerre polynomials with $\alpha = 1$.\\

These results are obtained by using a straightforward implementation of Newton's method in the following way:  Let $n$ be a fixed natural number and consider the vector $\vec{x}=(x_1,x_2,\dots,x_n)$ which contains the zeros of the orthogonal polynomial of degree $n$ and $\vec{f} = (f_1,f_2,\dots,f_n)$ be a vector valued function.  With this notation, we can write the system of equations as $\vec{f}(\vec{x}) = \vec{0}$.  The nonlinear equation above is represented by (4) in the case of the Jacobi polynomials, by (5) in the case of the generalized Laguerre polynomials, and by (6) in the case of the Hermite polynomials.  As for the initial guess we relied on formulas given in Section 18.16 of \cite{NIST:DLMF}.\\

Since the exact roots are known for the Chebyshev case, one may calculate the exact error.  Using the infinity norm we have for $n = 20$ the exact error is $6.749\times10^{-17}$, while for $n = 25$ the exact error is $6.297\times10^{-17}$.  We also provide error estimates in each case using the infinity norm.

\begin{tabular}{c}
{\Large \textbf{Error Estimates: }$\mathbf{n = 20}$}\\
\begin{tabular}{|c|c|}
\hline
   \textbf{Polynomial}  & \textbf{Error Estimate} \\
   \hline
   Legendre  & $1.6064700823479085388\times10^{-16}$\\
   \hline
   General Jacobi $\alpha = 1/4$, $\beta = 1/8$ & $2.0443258006786251481\times10^{-16}$ \\
   \hline
   Gegenbauer & $2.4276213934271014550\times10^{-16}$\\
   \hline
   Chebyshev 1st Kind & $2.775557561562891350\times10^{-17}$ \\
   \hline
   Classical Laguerre & $7.0122389569333584353\times10^{-15}$ \\
   \hline
   General Laguerre $\alpha = 1$ & $1.0850726264919494635\times10^{-14}$\\
   \hline
   Hermite & $1.2572574676652352260\times10^{-16}$\\
   \hline
\end{tabular}
\end{tabular}

\noindent\\

\begin{tabular}{c}
{\Large \textbf{Error Estimates: }$\mathbf{n = 25}$}\\
\begin{tabular}{|c|c|}
\hline
   \textbf{Polynomial}  & \textbf{Error Estimate} \\
   \hline
   Legendre  & $2.1554887097997079110\times10^{-16}$\\
   \hline
   General Jacobi $\alpha = 1/4$, $\beta = 1/8$ & $1.1129640277756032144\times10^{-16}$ \\
   \hline
   Gegenbauer & $1.4290762156055028002\times10^{-16}$\\
   \hline
   Chebyshev 1st Kind & $1.3834655062070259971\times10^{-16}$ \\
   \hline
   Classical Laguerre & $8.9260826473499668326\times10^{-15}$ \\
   \hline
   General Laguerre $\alpha = 1$ & $2.4825341532472729961\times10^{-16}$\\
   \hline
   Hermite & $4.7043788112778503159\times10^{-16}$\\
   \hline
\end{tabular}
\end{tabular}

\noindent\\

\begin{tabular}{c}
{\large$\mathbf{n = 20}$ \textbf{with 30 iterations of Newton's Method}}\\
\\
\csvautotabular{CSV/N20I30A.csv}\end{tabular}

\begin{tabular}{c}
{\large$\mathbf{n = 20}$ \textbf{with 30 iterations of Newton's Method}}\\
\\
\csvautotabular{CSV/N20I30B.csv}
\end{tabular}

\newpage

\begin{tabular}{c}
{\large$\mathbf{n = 25}$ \textbf{with 30 iterations of Newton's Method}}\\
\\
\csvautotabular{CSV/N25I30A.csv}\end{tabular}

\newpage

\begin{tabular}{c}
{\large$\mathbf{n = 25}$ \textbf{with 30 iterations of Newton's Method}}\\
\\
\csvautotabular{CSV/N25I30B.csv}
\end{tabular}

\newpage

\begin{tabular}{c}
{\large$\mathbf{n=25}$ \textbf{with 30 iterations of Newton's Method}}\\
\\
\csvautotabular{CSV/HN25I30.csv}

\end{tabular}

\section{Conclusion}

We have presented a unified approach for calculating the zeros of the classical orthogonal polynomials, and provided examples involving the Jacobi polynomials, including Chebyshev and Gengebauer, the General Laguerre polynomials, including Legendre and Laguerre, and the Hermite polynomials.  The approach has the potential to work for other cases of orthogonal polynomials, such as the Heine-Stietljes polynomials.  Future avenues of research include expanding the families of orthogonal polynomials this method applies.  There are also families for which very little is known about the zeros, such as the Generalized Bessel polynomials.

\newpage

\bibliographystyle{unsrt}
\bibliography{bibli}

\end{document}